\newtheorem{theorem}{Theorem}[section]
\newtheorem{definition}[theorem]{Definition}
\newtheorem{lemma}[theorem]{Lemma}
\newtheorem{conjecture}[theorem]{Conjecture}
\newtheorem{notation}[theorem]{Notation}
\newtheorem{example}[theorem]{Example}
\newdimen\nodeDist
\tikzset{
    position/.style args={#1:#2 from #3}{
        at=(#3.#1), anchor=#1+180, shift=(#1:#2)
    }
}
\tikzstyle{every node} = [circle, fill=gray!30,inner sep=0pt, minimum size = .5cm]
\tikzstyle{label} = [fill=white!0]
\tikzstyle{edge} = [draw=black]
\tikzstyle{edge_recent} = [draw=black, line width=2, very thick]
\tikzstyle{new} = [fill=red!30]
\tikzstyle{green} = [fill=green!30]
\tikzstyle{purple} = [fill=purple!30]
\tikzstyle{cyan} = [fill=cyan!30]
\tikzstyle{orange} = [fill=orange!30]
\tikzstyle{blue} = [fill=blue!30]
\title{Conditions for Building Generalized Action Graphs from Sequences}
\author[S. Klanderman]{Sarah Klanderman}
\address{Department of Mathematical and Computational Sciences, Marian University, 3200 Cold Spring Road, Indianapolis, IN, 46222, USA}
\email{sklanderman@marian.edu}
\author[K. McDicken]{Katy McDicken}
\address{Department of Mathematical and Computational Sciences, Marian University, 3200 Cold Spring Road, Indianapolis, IN, 46222, USA}
\email{kmcdicken936@marian.edu}
\author[A. Tebbe]{Amelia Tebbe}
\address{Department of Mathematics, Indiana University Kokomo, 2300 S. Washington St., Kokomo, IN, 46902, USA}
\email{antebbe@iu.edu}
\date{}
\keywords{Catalan number, directed graph.}
\subjclass[2020]{Primary: 05A19, 05C05}
\begin{document}
\maketitle

\section{Abstract}
This paper explores the properties of directed graphs, termed \emph{generalized action graphs}, which exhibit a strong connection to certain number sequences. Focusing on the structural and combinatorial aspects, we investigate the conditions under which specific sequences can generate generalized action graphs. Building upon prior research in this field, we analyze specific features of these graphs and how they correspond to patterns and properties in their sequences. These findings support a broader conclusion that establishes framework for identifying which sequences can produce generalized action graphs. 
\section{Introduction} 
Action graphs were first constructed in work studying rooted category actions by Bergner-Hackney. Alvarez et al. later showed that action graphs can be defined inductively and are related to the Catalan numbers, a well known sequence of natural numbers that have many ties to combinatorics \cite{ActionGraphs}. Caldwell et al. also observed that each \emph{action graph} has subgraphs that are isomorphic to previous graphs. These graphs were later generalized  by Cressman-Lin-Nguyen-Wiljanen, and Caldwell et al., showing a different set of directed graphs are similarly related to the Fuss-Catalan numbers and the super Catalan numbers, respectively \cite{CURMGroup}. While previous research has established properties of action graphs, a generalized framework for determining which sequences generate such graphs remains elusive. In this paper, we establish criteria for identifying sequences that yield action graphs. This result unifies and extends previous findings in this area. Formally, we will prove the following theorem: 
\begin{restatable}{thm}{Mtheorem}\label{Mtheorem}
    Consider a positive sequence $s_n$ with $n\geq 0$ and $s_0=1$. If there exist positive integers $z_n$ such that $z_1=s_1$, and for all $n \geq 2$,  $$s_n = z_n + \sum_{i=1}^{n-1} z_i \cdot s_{n-i},$$  then there exist generalized action graphs $G_n$ and, for $i\leq n$, $z_i$ is the number of vertices in $G_n$ labeled $i$ adjacent to the root.  
\end{restatable}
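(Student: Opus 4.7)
The plan is to argue by strong induction on $n$, constructing each $G_n$ explicitly and showing that the quantity $s_n$ coincides with the number of vertices introduced in passing from $G_{n-1}$ to $G_n$. For the base cases, take $G_0$ to be a single root vertex and form $G_1$ by attaching $z_1=s_1$ pendants labeled $1$ to the root; both meet the generalized action graph definition used in \cite{ActionGraphs,CURMGroup}, and the claim about counts of root-adjacent labeled vertices is immediate.

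For the inductive step, I would strengthen the hypothesis to say that for every $k<n$ the constructed graph $G_k$ has the further property that the subgraph hanging off each of its label-$i$ root-neighbors is, after relabeling by the shift $+i$, a copy of $G_{k-i}$. To produce $G_n$ from $G_{n-1}$, I would then carry out two moves: first, attach $z_n$ fresh pendants labeled $n$ directly to the root; and second, for each of the $z_i$ root-adjacent vertices $v$ bearing label $i$ with $1\le i\le n-1$, extend the subgraph currently hanging off $v$---which by hypothesis is already a copy of $G_{n-1-i}$---into a copy of $G_{n-i}$ by applying the same recursive recipe one layer deeper, with labels inside this subgraph shifted by $+i$ so that the global label of each vertex records the step at which it was introduced.

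Counting the newly added vertices then closes the induction: move one contributes $z_n$, while move two contributes, for each $i$, exactly $z_i$ copies of the vertex difference between $G_{n-i}$ and $G_{n-1-i}$, which by the hypothesis is $s_{n-i}$. Summing gives $z_n+\sum_{i=1}^{n-1}z_i s_{n-i}=s_n$, matching the prescribed recursion. The positivity of the $z_i$ ensures that each extension is nontrivial and the construction is well-defined.

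The main obstacle is not the bookkeeping but verifying that this recursive prescription genuinely produces a generalized action graph in the formal sense of the references, rather than merely an abstract labeled rooted tree. One must check that the label-shift convention is internally consistent when iterated, that the simultaneous extensions at distinct root-neighbors do not interfere with one another, and that the strengthened hypothesis about hanging subgraphs is itself preserved into $G_n$. I plan to handle this by fixing, once and for all, the convention that a vertex's label equals the step of its first introduction, and then running a short nested induction on depth to confirm that inside each hanging subgraph the structure produced really is $G_{n-i}$ with the prescribed shift. Once this is in hand, the outer induction concludes, and in particular the total number of root-neighbors labeled $i$ in $G_n$ is exactly $z_i$ for each $i\le n$.
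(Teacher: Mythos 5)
Your proposal is correct and follows essentially the same route as the paper's proof: induction on $n$, with the strengthened hypothesis that each root-adjacent vertex labeled $i$ roots a subtree isomorphic to $G_{k-i}$ (the paper's \ref{Axiom2}), extension of each such subtree to a copy of $G_{k-i+1}$ in the inductive step, and the count $z_{k+1}+\sum_{i=1}^{k} z_i\cdot s_{k+1-i}=s_{k+1}$ of the newly added vertices. The only difference is presentational: the paper works through explicit base cases $n=1,2,3$ with pictures rather than spelling out your nested label-shift verification.
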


This theorem provides a method for determining which sequences will yield generalized action graphs. Furthermore, the proof of the theorem describes a method for constructing the graphs.

In this paper, we summarize previous work constructing graphs relating to the Catalan, Fuss-Catalan, and super Catalan number sequences and compare those sequences with our theorem. Then, in Section \ref{Sec: AGP}, we consider the properties that all action graphs satisfy. Section \ref{Sec: BAG} provides the proof of Theorem \ref{Mtheorem}.

\section{Generalized Action Graphs}
Generalized action graphs have been constructed to model the Catalan numbers, the Fuss-Catalan numbers, and the super Catalan numbers. 
The process of generating such directed graphs in each of these settings is unique to the sequence that it represents. Alvarez, Bergner, and Lopez were the first to relate these action graphs to the Catalan numbers \cite{ActionGraphs}. 
\subsection{Catalan Numbers}
\begin{definition}[\cite{catalantext}] \label{Catdef}
    The \emph{Catalan numbers}, denoted $C_n$, are a sequence of natural numbers given by 
$$ C_0 = 1, C_{n} = \displaystyle\sum_{i=0}^{n-1} C_i C_{n-1-i} = \binom{2n}{n} \frac{1}{n+1}.$$
\end{definition}
Observe that the first few numbers of this sequence are: $$[1, 1, 2, 5, 14 \ldots].$$ In \cite{ActionGraphs}, the authors showed that the Catalan numbers count the number of leaves added to each action graph. 
\begin{definition}[\cite{ActionGraphs}]
    Define the action graph $A_{n+1}$ inductively, starting with one vertex labeled zero for $A_0$. Construct an edge to a new vertex labeled $n+1$ from each leaf in $A_n$ and from the source of all non-trivial paths to vertices labeled $n$ in $A_n$.
\end{definition}
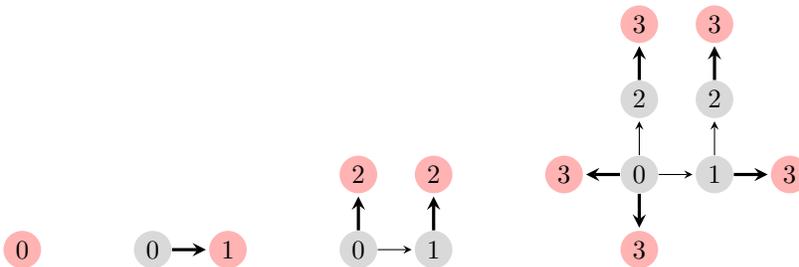
\begin{figure} [h]
\begin{center}
\begin{tikzpicture}[> = stealth,shorten > = 1pt,scale=.5]
    \node[new] (root) at (0,0) {0};
\end{tikzpicture}
\hspace{1cm}
\begin{tikzpicture}[> = stealth,shorten > = 1pt,scale=.5]
    \node (root) at (0,0) {0};
    \node[new] (1) at (2,0) {1};
    \path[->][edge_recent]  (root) to (1);
\end{tikzpicture}
\hspace{1cm}
\begin{tikzpicture}[> = stealth,shorten > = 1pt,scale=.5]
    \node (root) at (0,0) {0};
    \node (1) at (2,0) {1};
    \node[new] (2) at (2,2) {2};
    \node[new] (3) at (0,2) {2};
    \path[->][edge]  (root) to (1);
    \path[->][edge_recent]  (root) to (3);
    \path[->][edge_recent]  (1) to (2);
\end{tikzpicture}
\hspace{1cm}
\begin{tikzpicture}[> = stealth,shorten > = 1pt,scale=.5]
    \node (root) at (0,0) {0};
    \node (1) at (2,0) {1};
    \node (2) at (2,2) {2};
    \node (3) at (0,2) {2};
    
    \node[new] (4) at (0,4) {3};
    \node[new] (5) at (2,4) {3};
    \node[new] (6) at (4,0) {3};
    \node[new] (7) at (-2,0) {3};
    \node[new] (8) at (0,-2) {3};
    
    \path[->][edge]  (root) to (1);
    \path[->][edge]  (root) to (3);
    \path[->][edge]  (1) to (2);
    \path[->][edge_recent]  (3) to (4);
    \path[->][edge_recent]  (2) to (5);
    \path[->][edge_recent]  (1) to (6);
    \path[->][edge_recent]  (root) to (7);
    \path[->][edge_recent]  (root) to (8);
\end{tikzpicture}
\end{center}
\caption{Action Graphs $A_0$ through $A_3$}
\label{actiongraphfigure}
\end{figure}

The vertices are labeled starting at $0$, and every vertex added to each graph will be labeled with the same number, corresponding to the step in the inductive definition. For example, new vertices in action graph $A_2$ are labeled $2$, and there will be a total of $C_2$ vertices added to construct that graph.

Since the Catalan numbers have been shown to yield action graphs, and in fact are encoded in the original example of action graphs, we will consider how this result compares to Theorem \ref{Mtheorem} in the following example. More specifically, we examine $A_3$, given in the figure below.
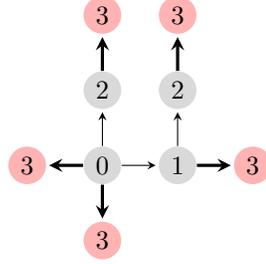
\begin{figure} [H]
\begin{center}
\hspace{1cm}
\begin{tikzpicture}[> = stealth,shorten > = 1pt,scale=.5]
    \node (root) at (0,0) {0};
    \node (1) at (2,0) {1};
    \node (2) at (2,2) {2};
    \node (3) at (0,2) {2};
    
    \node[new] (4) at (0,4) {3};
    \node[new] (5) at (2,4) {3};
    \node[new] (6) at (4,0) {3};
    \node[new] (7) at (-2,0) {3};
    \node[new] (8) at (0,-2) {3};
    
    \path[->][edge]  (root) to (1);
    \path[->][edge]  (root) to (3);
    \path[->][edge]  (1) to (2);
    \path[->][edge_recent]  (3) to (4);
    \path[->][edge_recent]  (2) to (5);
    \path[->][edge_recent]  (1) to (6);
    \path[->][edge_recent]  (root) to (7);
    \path[->][edge_recent]  (root) to (8);
\end{tikzpicture}
\end{center}
\caption{Action Graph $A_3$}
\label{actiongraphA3}
\end{figure}
\begin{example} \label{ex3.3}
    From Definition \ref{Catdef}, $C_n=[1, 1, 2, 5, 14 \ldots]$. Taking a look at Figure \ref{actiongraphA3}, we can determine the $z_n$ values of Theorem \ref{Mtheorem} by counting the number of vertices of each label coming off the root vertex. For action graph $A_3$, $z_n = [1,1,2\dots]$. We see that these values satisfy the formula of Theorem \ref{Mtheorem}: 
    \begin{align}
        \notag
        C_3 &= z_3 + z_1 \cdot C_2 + z_2 \cdot C_1 \\ \notag
        &= 2 + 1 \cdot 2 + 1 \cdot 1 \\ \notag
        &= 5 
    \end{align}
    Since $C_3 = 5$, Theorem \ref{Mtheorem} asserts that Catalan numbers can be modeled using action graphs, at least up to $n=3$, which is consistent with the work of \cite{ActionGraphs}. \\
    We can see this through another example using action graph $A_4$. Through construction, we can determine the $z_n$ values. For action graph $A_4$, $z_n = [1,1,2,5 \ldots]$. We see that these values satisfy the formula of Theorem \ref{Mtheorem}:
    \begin{align}
        C_4 &= z_4 + z_1 \cdot C_3 + z_2 \cdot C_2 + z_3 \cdot C_1 \\ \notag
        &= 5 + 1 \cdot 5 + 1 \cdot 2 + 2 \cdot 1 \\ \notag 
        &= 14
    \end{align}
    This pattern continues for all $n$.
\end{example}

There is a further connection between the values $z_j$ of Theorem \ref{Mtheorem} and the Catalan number sequence values $C_j$.

\begin{theorem}
    For the Catalan number sequence, $C_j$, and the corresponding sequence $z_j$ of Theorem \ref{Mtheorem},  $$z_j = C_j - \sum_{i=1}^{j-1}{z_i \cdot C_{j-i}}= C_{j-1}.$$
\end{theorem}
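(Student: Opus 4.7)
The first equality is immediate: it is simply the recurrence from Theorem \ref{Mtheorem} solved for $z_j$. So the entire content of the claim is the identification $z_j = C_{j-1}$, and my plan is to establish this by induction on $j$, leveraging the well-known self-convolution recurrence for the Catalan numbers.

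For the base case, $j = 1$: by the hypothesis of Theorem \ref{Mtheorem} we have $z_1 = s_1 = C_1 = 1$, which equals $C_0 = 1$, as required. For the inductive step, assume $z_i = C_{i-1}$ for all $1 \le i \le j-1$. Using the first equality and substituting the inductive hypothesis, I would compute
\begin{align*}
z_j &= C_j - \sum_{i=1}^{j-1} z_i \cdot C_{j-i} \\
&= C_j - \sum_{i=1}^{j-1} C_{i-1} \cdot C_{j-i}.
\end{align*}
Reindexing with $k = i - 1$ turns the sum into $\sum_{k=0}^{j-2} C_k \cdot C_{j-1-k}$. Comparing this against the Catalan recursion $C_j = \sum_{k=0}^{j-1} C_k \cdot C_{j-1-k}$ from Definition \ref{Catdef}, the difference is precisely the missing $k = j-1$ term, namely $C_{j-1} \cdot C_0 = C_{j-1}$. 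Hence $z_j = C_{j-1}$, closing the induction.

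There is no genuine obstacle here; the argument is a bookkeeping exercise that aligns the $z$-recursion inherited from Theorem \ref{Mtheorem} with the Catalan convolution. The one detail worth being careful about is the index shift, since the recurrence from Theorem \ref{Mtheorem} is indexed so that $z_j$ multiplies $s_{0} = C_0$ in the $i = j$ term (which is separated out as the leading $z_n$), while the Catalan recurrence packages this in the sum. Once the reindexing $k = i - 1$ is carried out, the two formulas differ by exactly the highest-order term, which yields $C_{j-1}$ on the nose.
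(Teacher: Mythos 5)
Your proposal is correct and follows essentially the same route as the paper's proof: induction on $j$, substitution of the inductive hypothesis $z_i = C_{i-1}$ into the rearranged recurrence, and comparison with the Catalan convolution $C_j = \sum_{k=0}^{j-1} C_k C_{j-1-k}$ so that only the term $C_{j-1}C_0$ survives. The only cosmetic difference is that you isolate the leftover term by reindexing, whereas the paper writes out the sums explicitly and cancels term by term.
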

\begin{proof}
    We will use the principle of mathematical induction. Let $C_j$ be the Catalan numbers and $z_j$ be the number of vertices labeled $j$ connected to the vertex labeled $0$. Observe by rearranging the formula from Theorem \ref{Mtheorem}:
    $$z_j = C_j - \sum_{j-1}^{i=1}{z_j \cdot C_{j-i}}$$
    It remains to show that $z_j=C_{j-1}.$
    
\underline{Base Case:} \\
     From Example \ref{ex3.3}, we know $z_1 = 1$, which is also equal to $C_0$.  
    
\underline{Inductive Hypothesis:}
    Assume for all $1\leq j<k$\\
    \begin{align*}
        z_j &= C_{j-1}.
    \end{align*}
    \underline{Inductive Step:} We will prove that $z_k = C_{k-1}$. As previously noted, $$z_k = C_k - \sum_{i=1}^{k-1}{z_i \cdot C_{k-i}}.$$
    Below, we expand this summation, use the summation definition of Catalan numbers to replace $C_k$ with a sum, and then cancel corresponding terms. 
    \begin{align*}
        z_k &= C_k - z_1 C_{k-1} - z_2  C_{k-2} - z_3  C_{k-3} -\cdots -z_{k-2}  C_2 - z_{k-1}  C_1 \\
        &= C_k - C_0  C_{k-1} - C_1  C_{k-2} - C_2  C_{k-3} - \cdots - C_{k-3}  C_2 - C_{k-2}  C_1  \\
        &= (C_0  C_{k-1} + C_1  C_{k-2} + C_2  C_{k-3} + \cdots + C_{k-3}  C_2 + C_{k-2} C_1 + C_{k-1}  C_0)  \\
        &\hspace{2\parindent} - C_0  C_{k-1} - C_1  C_{k-2} - C_2  C_{k-3} - \cdots - C_{k-3} C_2 - C_{k-2}  C_1 \\
        &= (C_0  C_{k-1} + \cancel{C_1  C_{k-2}} + \cancel{C_2  C_{k-3}} + \cdots + \cancel{C_{k-3}  C_2} + \cancel{C_{k-2} C_1} + \cancel{C_{k-1}  C_0})  \\
        &\hspace{2\parindent} - \cancel{C_0  C_{k-1}} - \cancel{C_1  C_{k-2}} - \cancel{C_2  C_{k-3}} - \cdots - \cancel{C_{k-3} C_2} - \cancel{C_{k-2}  C_1} \\
        &= C_{k-1}C_0 
    \end{align*}
    Since $C_0 = 1$, the result is
    \begin{align*}
        z_k &= C_{k-1}.
    \end{align*}
    Therefore $z_k = C_{k-1}$ for all $k>0$.
\end{proof}

\subsection{Path Length} 
Two important aspects of constructing generalized action graphs are \emph{\textit{paths}} and \emph{\textit{path length}}. For our purposes, throughout this paper \textit{paths} will refer to directed paths that start at any vertex and end at a leaf. Note that the longest paths in the graph $A_{n+1}$ will start at the vertex labeled $0$ and end at a vertex labeled $n+1$. We will denote path length by $\ell$, defined as the number of edges traveled from the initial vertex to the final vertex. 
\begin{figure} [H]
\begin{center}
\hspace{1cm}
\begin{tikzpicture}[> = stealth,shorten > = 1pt,scale=.5]
    \node[cyan, diamond] (root) at (0,0) {0};
    \node[rectangle, fill=none] at (2, -1) {\tiny{$\ell = 3$}};
    \node[cyan, diamond] (1) at (2,0) {1};
    \node[cyan, diamond] (2) at (2,2) {2};
    \node (3) at (0,2) {2};
    
    \node[orange, semicircle] (4) at (0,4) {3};
    \node[rectangle, fill=none] at (-1.65, 4) {\tiny{$\ell=0$}};
    \node[cyan, diamond] (5) at (2,4) {3};
    \node (6) at (4,0) {3};
    \node (7) at (-2,0) {3};
    \node (8) at (0,-2) {3};
    
    \path[->][edge]  (root) to (1);
    \path[->][edge]  (root) to (3);
    \path[->][edge]  (1) to (2);
    \path[->][edge]  (3) to (4);
    \path[->][edge]  (2) to (5);
    \path[->][edge]  (1) to (6);
    \path[->][edge]  (root) to (7);
    \path[->][edge]  (root) to (8);
\end{tikzpicture}
\hspace{1cm}
\begin{tikzpicture}[> = stealth,shorten > = 1pt,scale=.5]
    \node[new, isosceles triangle] (root) at (0,0) {0};
    \node[rectangle, fill=none] at (2, -1) {\tiny{$\ell=2$}};
    \node[green, rectangle] (1) at (2,0) {1};
    \node[green, rectangle] (2) at (2,2) {2};
    \node (3) at (0,2) {2};
    
    \node (4) at (0,4) {3};
    \node[green, rectangle] (5) at (2,4) {3};
    \node[rectangle, fill=none] at (-1, 1) {\tiny{$\ell=1$}};
    \node (6) at (4,0) {3};
    \node[new, isosceles triangle] (7) at (-2,0) {3};
    \node (8) at (0,-2) {3};
    
    \path[->][edge]  (root) to (1);
    \path[->][edge]  (root) to (3);
    \path[->][edge]  (1) to (2);
    \path[->][edge]  (3) to (4);
    \path[->][edge]  (2) to (5);
    \path[->][edge]  (1) to (6);
    \path[->][edge]  (root) to (7);
    \path[->][edge]  (root) to (8);
\end{tikzpicture}
\end{center}
\caption{Different Paths of Length $\ell$ on Graph $A_3$}
\label{actiongraphfigurepathlength}
\end{figure}
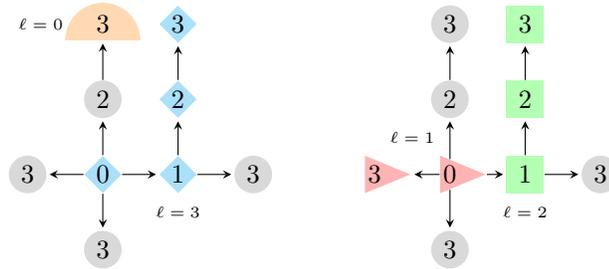
Figure \ref{actiongraphfigurepathlength} shows two versions of action graph $A_3$, colored in different ways to highlight different paths. 
As seen above, there are four colored paths on graph $A_3$, of varying path length, $\ell$. The graph on the left shows a path of length 3, starting from the 0 vertex, highlighted by \textcolor{cyan}{blue diamonds}. A trivial path length of 0, starting and ending at a vertex labeled 3 is highlighted by an \textcolor{orange}{orange semicircle}. The graph on the right shows a path length of 1 in \textcolor{red}{red triangles}, starting at the vertex labeled 0. There is also a path length of 2, of \textcolor{green}{green rectangles}, starting from the 1 vertex. Path length does not impact the construction of the action graphs related to the Catalan numbers; however, it plays a large role in constructing the generalized action graphs related to other number sequences. \\

\subsection{Fuss-Catalan Numbers}
The \emph{Fuss-Catalan} numbers are related to the Catalan numbers, and they can also be modeled using a generalization of the action graph construction. 
\begin{definition}\label{FussCatDef}(\cite{catalantext})
    The \emph{Fuss-Catalan} numbers are a generalization of the Catalan numbers with two arguments, $n$ and $k$. There are both recursive and explicit formulas for the Fuss-Catalan numbers:
    \[  C_{n,k} = \sum_{n_1+n_2+\ldots+n_{k+1}=n-1} \prod_{i=1}^{k+1} C_{n_i,k} = \frac{\binom{n(k+1)}{n}}{kn+1}.\] 
\end{definition}
Unlike the Catalan numbers, the Fuss-Catalan numbers are dependent upon 2 variables. When $k=2$, the valuse of the Fuss-Catalan numbers are:  $$C_{n,2} = [1, 1, 3, 12, 55 \ldots].$$ And when $k=1$: $$C_{n,1} = C_n.$$\\
The authors in \cite{GenActionGraphs} expanded on the work done previously by Alvarez, Bergner, and Lopez by developing new action graphs for the Fuss-Catalan numbers, which they called \textit{generalized action graphs}. \\
\begin{definition}[\cite{GenActionGraphs}]
The \emph{generalized action graph} $T_{n,k}$ is generated by adding $\binom{\ell + k - 1}{\ell}$ vertices to each vertex of the previous graph, where $\ell$ is the path length. The first graph, $A_0$ consists of one vertex labeled 0. The total number of new vertices is equal to the Fuss-Catalan number $C_{n,k}$.
\end{definition}

\begin{figure}[H]
\begin{center}

\begin{tikzpicture}[> = stealth,shorten > = 1pt,scale=.5]
    \node[new] (root) at (0,0) {0};
    \node[rectangle, fill=none] at (0, -1) {\tiny{$C_{0,2}=1$}};
\end{tikzpicture}
\hspace{1cm}
\begin{tikzpicture}[> = stealth,shorten > = 1pt,scale=.5]
    \node (root) at (0,0) {0};
    \node[rectangle, fill=none] at (0.5, -1) {\tiny{$C_{1,2}=1$}};
    \node[new] (1) at (2,0) {1};
    \path[->][edge_recent]  (root) to (1);
\end{tikzpicture}
\hspace{1cm}
\begin{tikzpicture}[> = stealth,shorten > = 1pt,scale=.5]
    \node (root) at (0,0) {0};
    \node[rectangle, fill=none] at (0, -1) {\tiny{$C_{2,2}=3$}};
    \node (1) at (2,0) {1};
    \node[new] (2) [position=80:{\nodeDist} from root] {2};
    \node[new] (3) [position=120:{\nodeDist} from root] {2};
    \node[new] (4) [above of=1] {2};
    \path[->][edge]  (root) to (1);
    \path[->][edge_recent]  (root) to (3);
    \path[->][edge_recent]  (root) to (2);
    \path[->][edge_recent]  (1) to (4);
\end{tikzpicture}
\hspace{1cm}
\begin{tikzpicture}[> = stealth,shorten > = 1pt,scale=.25]
    \node (root) at (-0.5,0) {0};
    \node[rectangle, fill=none] at (0, -8) {\tiny{$C_{3,2}=12$}};
    \node (1) at (4,0) {1};
    
    \node (2) [position=80:{\nodeDist} from root] {2};
    \node (3) [position=120:{\nodeDist} from root] {2};
    \node (4) [above of=1] {2};
    
    \node[new] (5) [above of=2] {3};
    \node[new] (6) [above of=3] {3};
    \node[new] (7) [above of=4] {3};
    \node[new] (8) [above right of=1] {3};
    \node[new] (9) [right of=1] {3};
    
    \node[new] (16) [position=150:{\nodeDist} from root] {3};
    \node[new] (10) [position=180:{\nodeDist} from root] {3};
    \node[new] (11) [position=210:{\nodeDist} from root] {3};
    \node[new] (12) [position=240:{\nodeDist} from root] {3};
    \node[new] (13) [position=270:{\nodeDist} from root] {3};
    \node[new] (14) [position=300:{\nodeDist} from root]{3};
    \node[new] (15) [position=330:{\nodeDist} from root] {3};

    \path[->][edge]  (root) to (1);
    
    \path[->][edge]  (root) to (3);
    \path[->][edge]  (root) to (2);
    \path[->][edge]  (1) to (4);
    
    \path[->][edge_recent]  (2) to (5);
    \path[->][edge_recent]  (3) to (6);
    \path[->][edge_recent]  (1) to (8);
    \path[->][edge_recent]  (1) to (9);
    \path[->][edge_recent]  (4) to (7);
    
    \path[->][edge_recent]  (root) to (10);
    \path[->][edge_recent]  (root) to (11);
    \path[->][edge_recent]  (root) to (12);
    \path[->][edge_recent]  (root) to (13);
    \path[->][edge_recent]  (root) to (14);
    \path[->][edge_recent]  (root) to (15);
    \path[->][edge_recent]  (root) to (16);

\end{tikzpicture}
\end{center}
\caption{Generalized Action Graphs for the Fuss Catalan numbers $T_{0,2}$ through $T_{3,2}$}
\end{figure}
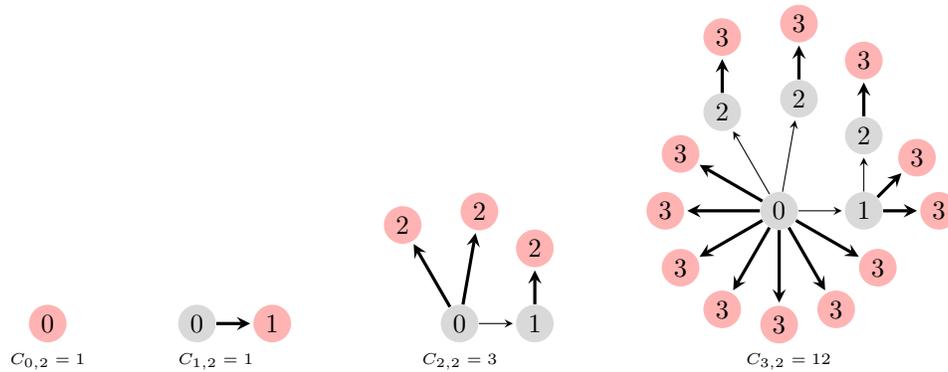
Unlike the action graphs that are related to the Catalan numbers, the number of new vertices added to each vertex for the \textit{generalized action graphs} is $\binom{\ell+k-1}{\ell}$, which is dependent on path length.
Note that by definition $$\binom{\ell+k-1}{\ell} = \frac{(\ell+k-1)!}{\ell!(k-1)!}.$$
When $k=2$, this expression simplifies to
\begin{align}
    \notag \frac{(\ell+k-1)!}{\ell!(k-1)!} = \frac{(1+2-1)!}{1!(2-1)!} &= \frac{2!}{1!(1!)} =2,
\end{align}
in terms of $\ell$. \\
So, when $k=2$, the number of vertices added to the start of each path when $\ell=0,1,2,3\ldots$ is $[1, 2, 3, 4\ldots]$.
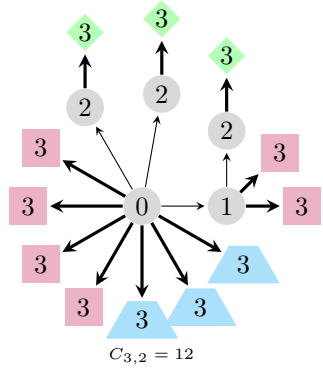
\begin{figure}[H]
\begin{center}
\begin{tikzpicture}[> = stealth,shorten > = 1pt,scale=.25]
    \node (root) at (-0.5,0) {0};
    \node[rectangle, fill=none] at (0, -8) {\tiny{$C_{3,2}=12$}};
    \node (1) at (4,0) {1};
    
    \node (2) [position=80:{\nodeDist} from root] {2};
    \node (3) [position=120:{\nodeDist} from root] {2};
    \node (4) [above of=1] {2};
    
    \node[green, diamond] (5) [above of=2] {3};
    \node[green, diamond] (6) [above of=3] {3};
    \node[green, diamond] (7) [above of=4] {3};
    \node[purple, rectangle] (8) [above right of=1] {3};
    \node[purple, rectangle] (9) [right of=1] {3};
    
    \node[purple, rectangle] (16) [position=150:{\nodeDist} from root] {3};
    \node[purple, rectangle] (10) [position=180:{\nodeDist} from root] {3};
    \node[purple, rectangle] (11) [position=210:{\nodeDist} from root] {3};
    \node[purple, rectangle] (12) [position=240:{\nodeDist} from root] {3};
    \node[cyan, trapezium] (13) [position=270:{\nodeDist} from root] {3};
    \node[cyan, trapezium] (14) [position=300:{\nodeDist} from root]{3};
    \node[cyan, trapezium] (15) [position=330:{\nodeDist} from root] {3};

    \path[->][edge]  (root) to (1);
    
    \path[->][edge]  (root) to (3);
    \path[->][edge]  (root) to (2);
    \path[->][edge]  (1) to (4);
    
    \path[->][edge_recent]  (2) to (5);
    \path[->][edge_recent]  (3) to (6);
    \path[->][edge_recent]  (1) to (8);
    \path[->][edge_recent]  (1) to (9);
    \path[->][edge_recent]  (4) to (7);
    
    \path[->][edge_recent]  (root) to (10);
    \path[->][edge_recent]  (root) to (11);
    \path[->][edge_recent]  (root) to (12);
    \path[->][edge_recent]  (root) to (13);
    \path[->][edge_recent]  (root) to (14);
    \path[->][edge_recent]  (root) to (15);
    \path[->][edge_recent]  (root) to (16);

\end{tikzpicture}
\end{center}
\caption{Generalized Action Graph $T_{3,2}$ Colored Based on Path Lengths}
\label{GenActionGraphPathLength}
\end{figure}
Figure \ref{GenActionGraphPathLength} illustrates how each vertex is added to the start of each path, based on path length. The \textcolor{green}{green diamond} vertices are added based on path lengths of 0, where one vertex is added per path. The \textcolor{purple}{purple rectangle} vertices are added for path lengths of $\ell=1$. These vertices are added to the initial vertex of the path, with two vertices being added to the vertex labeled 1, and four vertices added to the 0 vertex, for the two paths of $\ell=1$ ending at the vertices labeled 2. The \textcolor{cyan}{blue trapezoid} vertices represent the vertices added when $\ell=3$. Since there is only one path in this graph of length two, three vertices are added to the 0 vertex. 

The work of Cressman et al. constructed generalized action graphs for the Fuss-Catalan numbers. The following example shows that the Fuss-Catalan numbers also satisfy the hypotheses of Theorem \ref{Mtheorem}, demonstrated using the generalized action graph for $C_{3,2}$.
\begin{example}
    Consider the case where $k=2$. Observe from Figure \ref{GenActionGraphPathLength}, $z_n = [{1, 2, 7\ldots}]$, and from Definition \ref{FussCatDef}, our sequence values are $s_n=C_{n,2}=[1, 1, 3, 12\ldots]$. Then to satisfy the hypothesis of Theorem \ref{Mtheorem}, we must check that $$C_{n, 2} = z_n + \sum_{i=1}^{n-1} z_i \cdot C_{(n-i),2}.$$ Consider $n = 3:$
   \begin{align}
       \notag C_{3, 2} &=z_3 + z_1 \cdot C_{2,2} + z_2 \cdot C_{1,2} \\ \notag
       &= 7 + 1 \cdot 3 + 2 \cdot 1 \\ \notag
       &= 12.
   \end{align}
   Since $C_{3,2} = 12$, Theorem \ref{Mtheorem} asserts that the Fuss-Catalan numbers yield generalized action graphs for $k=2$ and $n\leq 3$. 
\end{example}

\subsection{Super Catalan Numbers}
The \emph{super Catalan} numbers are another generalization of the sequence of Catalan numbers, with two arguments, $m$ and $n$. Caldwell et al. conjectured corresponding generalized action graphs could be constructed for some cases of the super Catalan numbers \cite{CURMGroup}.\\
\\
\begin{definition}[\cite{catalantext}]\label{SuperCatalnDef}
    The super Catalan numbers are defined by $$S(m,n) = \frac{(2m)!(2n)!}{m!n!(m+n)!}.$$
\end{definition}
When $m = 0$, the first few numbers in the sequence are $$[1, 2, 6, 20, 70 \ldots].$$
Similar to the generalized action graphs for the Fuss-Catalan numbers, the authors in \cite{CURMGroup} conjectured a construction of generalized action graphs for the super Catalan numbers. The construction was based on path length as well as the number of paths. 
\begin{definition}[\cite{CURMGroup}]\label{SuperCatAGConstruction}
    Construct the sequence of directed graphs $\{G_n\}$ inductively. The graph $G_0$ is a single vertex labeled 0. To construct $G_{n+1}$ from $G_n$, consider each vertex $v$ in $G_n$. For each $0\leq\ell\leq n$, add $\displaystyle p(v,\ell)\frac{2}{2^\ell}$ new vertices labeled $n$ with edges from $v$, where $p(v,\ell)$ is the number of paths of length $\ell$ in graph $G_n$ from $v$ to a vertex labeled $n$. 
\end{definition}

\begin{figure}[H]
\begin{center}

\begin{tikzpicture}[> = stealth,shorten > = 1pt,scale=.5]
    \node[new] (root) at (0,0) {0};
\end{tikzpicture}
\hspace{1cm}
\begin{tikzpicture}[> = stealth,shorten > = 1pt,scale=.5]
    \node (root) at (0,0) {0};
    \node[new] (1) at (2,0) {1};
    \node[new] (2) at (0,2) {1};
    \path[->][edge_recent]  (root) to (1);
    \path[->][edge_recent]  (root) to (2);
\end{tikzpicture}
\hspace{1cm}
\begin{tikzpicture}[> = stealth,shorten > = 1pt,scale=.5]
    \node (root) at (0,0) {0};
    \node (1) at (2,0) {1};
    \node (2) at (0,2) {1};
    \node[new] (3) at (-0.75,4) {2};
    \node[new] (4) at (0.75,4) {2};
    \node[new] (5) at (4,0) {2};
    \node[new] (6) at (4,1.5) {2};
    \node[new] (7) at (1.75,2.5) {2};
    \node[new] (8) at (2.5,1.75) {2};
    \path[->][edge]  (root) to (1);
    \path[->][edge]  (root) to (2);
    \path[->][edge_recent]  (2) to (3);
    \path[->][edge_recent]  (2) to (4);
    \path[->][edge_recent]  (1) to (5);
    \path[->][edge_recent]  (1) to (6);
    \path[->][edge_recent]  (root) to (7);
    \path[->][edge_recent]  (root) to (8);
\end{tikzpicture}
\hspace{1cm}
\end{center}
\caption{Action Graphs $G_0$ through $G_2$}
\label{SuperCatFigure}
\end{figure}
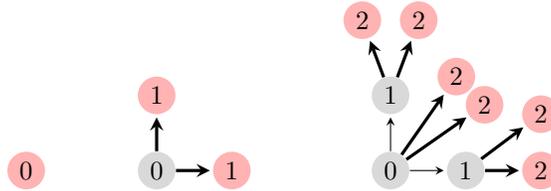
The construction of generalized action graphs for the super Catalan numbers is similar to the generalized action graphs for the Fuss-Catalan numbers, where the number of vertices added adjacent to a vertex is dependent on path length.  For the action graphs related to the super Catalan numbers, $\frac{2}{2^l}$ new vertices are added per path. When $\ell=1$: $$\frac{2}{2^1}=\frac{2}{2}=1.$$
More generally for $\ell=0,1,2,3\ldots$, the number of vertices added to the start of each path is scaled by $[2, 1, \frac{1}{2}, \frac{1}{4}\ldots]$. This can be seen in Figure \ref{SuperCatFigure}, in $G_2$, where 2 vertices are added to each of the vertices labeled 1, for paths of length 0, and 1 vertex is added to the 0 vertex of path of length 1.  

Initially, one might be concerned that this scaling by $\frac{2}{2{\ell}}$ would result in adding fractional vertices. However, it turns out that the number of paths $p(v,\ell)$ will have a large enough power of 2 as a factor to avoid this. 

Caldwell et al. gave the construction of directed graphs in Definition \ref{SuperCatAGConstruction} and conjectured that the graphs are generalized action graphs for $S_{0,n}$, the super Catalan numbers where $m=0$. Theorem \ref{Mtheorem} answers this conjecture. The following example applies Theorem \ref{Mtheorem} to the super Catalan numbers. This example will consider the generalized action graph $G_2$ as seen in Figure \ref{SuperCatFigure}. \\
 \begin{example}
    From Definition \ref{SuperCatalnDef}, when $m = 0$, $S_{0,n} = [1, 2 ,6, 20 \ldots]$. From Figure \ref{SuperCatFigure}, the $z_n$ values for the generalized action graph $G_2$, are $z_n = [2, 2, 4 \ldots]$. In order to apply Theorem \ref{Mtheorem}, we must check that $$S_{0,n} = z_n + \sum_{i=1}^{n-1}z_i \cdot S_{(0,n-i)}.$$ When $n=2:$
    \begin{align}
        \notag S_{0,2} &= z_2 + z_1 \cdot S_{0,1} \\ \notag
        &= 2 + 2 \cdot 2\\ \notag 
        &= 6.
    \end{align}
    Since $S_{0,2}= 6$, our theorem implies the existence of generalized action graphs for the super Catalan numbers when $m=0$ and $n\leq 2$. 
\end{example}

Because these graphs can be cumbersome to draw after the first few graphs, the remaining graphs will be drawn using a condensed notation developed in \cite{CURMGroup}.

\begin{notation}[\cite{CURMGroup}]\label{notation}
Since generalized action graphs have many identical subgraphs with the same labels, we collapse them. For an edge from a vertex labeled $a$ to a vertex labeled $b$, the multiplier $\times m$, indicates the number of such edges from a vertex labeled $a$ to vertices labeled $b$ it corresponds to in the original graph. For a vertex in the condensed form, we can find the number of vertices it represents in the standard form by multiplying the labels along the path from the root to that vertex. For example, the upper right vertex labeled 2 in the condensed graph in figure 3 represents the $2 \times 2 = 4$ vertices labeled 2 that are adjacent to the two vertices labeled 1 in the original graph. 
\end{notation}

\begin{minipage}{0.5\textwidth}
\begin{center}
    \begin{tikzpicture}[> = stealth,shorten > = 1pt,scale=.5]
    \node (root) at (0,0) {0};
    \node[new, isosceles triangle] (1) at (3,0) {1};
    \node[new, isosceles triangle] (2) at (0,3) {1};
    \path[->][edge_recent]  (root) to (1);
    \path[->][edge_recent]  (root) to (2);
    \node[cyan, diamond] (3) at (5.25,1.5) {2};
    \node[cyan, diamond] (4) at (5.5,0) {2};
    \node[cyan, diamond] (5) at (1, 5.25) {2};
    \node[cyan, diamond] (6) at (-1, 5.25) {2};
    \path[->][edge_recent]  (1) to (3);
    \path[->][edge_recent]  (1) to (4);
    \path[->][edge_recent]  (2) to (5);
    \path[->][edge_recent]  (2) to (6);
    \node[green, rectangle] (7) at (2.75, 1.75) {2};
    \node[green, rectangle] (8) at (1.75, 2.75) {2};
    \path[->][edge_recent]  (root) to (7);
    \path[->][edge_recent]  (root) to (8);
\end{tikzpicture}
\end{center}
\end{minipage}
\begin{minipage}{0.5\textwidth}
\begin{center}
    \begin{tikzpicture}[> = stealth,shorten > = 1pt,scale=.5]
    \node (root) at (0,0) {0};
    \node [new, isosceles triangle] (1) at (3,0) {1};
    \node[green, rectangle] (2) at (0,3) {2};
    \node[cyan, diamond] (4) at (3,3) {2};
{\tiny
    \draw[edge, ->] (root) -- (1) node[label, midway, above] {$\times 2$};
    \draw[edge, ->] (1) -- (4) node[label, midway, right] {$\times 2$};
    \draw[edge, ->] (root) -- (2) node[label, midway, right] {$\times 2$};
} 
    \path[->][edge] (root) to (1);
    \path[->][edge_recent] (root) to (2);
    \path[->][edge_recent] (1) to (4);
\end{tikzpicture}
\end{center}
\end{minipage}

\section{Generalized Action Graph Properties} \label{Sec: AGP}
 Action graphs were originally defined in the work of Bergner and Hackney on Reedy categories  \cite{Reedy}. Then Alvarez, Bergner and Lopez in \cite{ActionGraphs}, Caldwell et al. in \cite{CURMGroup}, and Cressman et al. in \cite{GenActionGraphs}, showed correspondences between these action graphs and sequences related to the Catalan numbers. Through the work of \cite{CURMGroup}, similar properties were found for all such generalized action graphs, regardless of the sequences with which the graphs are affiliated. We will assume these properties hold for the construction of all generalized action graphs. That is, when constructing action graph ${G_n}$ for sequence ${s_n}$, we will assume they satisfy the following definition.
 \begin{definition}[\cite{CURMGroup}]\label{axiomdef}
     The sequence ${G_n}$ of generalized action graphs for a particular sequence ${s_n}$ of positive integers is a sequence of directed, labeled graphs such that:
 \end{definition}
 \begin{enumerate}[label= Axiom \arabic{enumi}, leftmargin=*]
     \item \label{Axiom1} Define $G_0$ as the graph with $s_0$ vertices labeled 0 and no edges. Construct $G_n$ from $G_{n-1}$ by adding $s_n$ new vertices, which are each labeled $n$.
     \item \label{Axiom2} For vertex $v$ in $G_n$, the subtree of $G_n$ with root $v$ is isomorphic to some $G_k$ such that $k \leq n$.
     \item \label{Axiom3} All leaves in the graph $G_n$ have label $n$.
 \end{enumerate}

When constructing generalized action graphs for the super Catalan numbers, the authors in \cite{CURMGroup} also proved Lemmas \ref{AG_One} and \ref{squared}, as necessary properties for generalized action graphs. 

 \begin{lemma}[\cite{CURMGroup}] \label{AG_One}
     In order for a sequence $\{s_n\}_{n=0}$ to form a valid generalized action graph, $s_n$ must have the property that $s_0 = 1$.   
 \end{lemma}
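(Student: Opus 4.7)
The plan is to argue by contradiction directly from the axioms, working inside $G_0$ itself rather than any later graph. Suppose, for the sake of contradiction, that $s_0 \geq 2$. By \ref{Axiom1}, $G_0$ consists of exactly $s_0$ vertices, each labeled $0$, and no edges. Pick any single vertex $v$ of $G_0$. Because $G_0$ has no edges at all, the subtree of $G_0$ rooted at $v$ is the one-vertex graph $\{v\}$.

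Now apply \ref{Axiom2} to this vertex $v$: the subtree rooted at $v$ must be isomorphic to $G_k$ for some $k \leq 0$. The only such index available is $k = 0$, so the one-vertex subtree $\{v\}$ must be isomorphic to $G_0$ itself. But $G_0$ has $s_0 \geq 2$ vertices by hypothesis, and no isomorphism of labeled directed graphs can identify a one-vertex graph with a graph on two or more vertices. This contradiction forces $s_0 \leq 1$, and since the sequence is assumed to consist of positive integers, we conclude $s_0 = 1$.

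There is essentially no obstacle here: the entire content of the lemma is recognizing that \ref{Axiom2} has to hold even in the degenerate ambient graph $G_0$, where it severely constrains the number of starting vertices. The only thing to be slightly careful about is ruling out the case $s_0 = 0$ separately — but that is handled by the ambient assumption in Definition \ref{axiomdef} that the sequence $\{s_n\}$ consists of positive integers, so $s_0 \geq 1$ is given for free. No appeal to \ref{Axiom3} or to the recursion in Theorem \ref{Mtheorem} is needed.
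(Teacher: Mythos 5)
Your argument is correct, and it is worth noting that the paper itself does not supply a proof of this lemma at all: it is quoted from \cite{CURMGroup} and used as a black box, so there is no in-paper proof to compare against. Your derivation stands on its own and uses only the axioms as stated in Definition \ref{axiomdef}: by \ref{Axiom1} the graph $G_0$ has $s_0$ vertices and no edges, so the subtree rooted at any vertex $v$ of $G_0$ is the single vertex $\{v\}$; \ref{Axiom2} forces that subtree to be isomorphic to some $G_k$ with $k \leq 0$, hence to $G_0$ itself, and a one-vertex graph cannot be isomorphic to a graph on $s_0 \geq 2$ vertices (the paper's convention that isomorphisms may shift labels but preserve the underlying structure does not rescue this). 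Combined with the standing positivity assumption on the sequence, this gives $s_0 = 1$. You are also right that neither \ref{Axiom3} nor the recursion of Theorem \ref{Mtheorem} is needed. An equally natural alternative, in case you want a version that does not apply \ref{Axiom2} inside the degenerate graph $G_0$, is to apply it to a leaf of $G_n$ for some $n \geq 1$: the subtree rooted at a leaf is again a single vertex, and since every $G_k$ with $k \geq 1$ has at least $s_0 + s_1 \geq 2$ vertices by \ref{Axiom1}, that single vertex must be isomorphic to $G_0$, forcing $s_0 = 1$. Both routes are essentially the same observation; yours is the more economical.
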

 \begin{lemma}[\cite{CURMGroup}]\label{squared}
    In order for a sequence $\{s_n\}_{n=0}$ to form a valid generalized action graph, $s_n$ must have the property $s_2\geq s_{1}^2$.
\end{lemma}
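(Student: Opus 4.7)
The plan is to use Axioms~\ref{Axiom1}, \ref{Axiom2}, and \ref{Axiom3}, together with Lemma~\ref{AG_One}, to first pin down the structure of $G_1$ and then count the label-2 vertices that Axiom~\ref{Axiom2} forces in $G_2$.

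First I would establish the structure of $G_1$. By Lemma~\ref{AG_One} and Axiom~\ref{Axiom1}, $G_0$ is a single root labeled $0$, and $G_1$ is obtained by adjoining $s_1$ new vertices labeled $1$. Because labels strictly increase along edges (a vertex labeled $n$ is first introduced at step $n$, and its parent must already exist with a smaller label), each new label-1 vertex must be a child of the root. By Axiom~\ref{Axiom3} applied to $G_1$, every label-1 vertex is then a leaf. So $G_1$ is the star with root labeled $0$ and $s_1$ pendant children labeled $1$.

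Next I would analyze the label-1 vertices in $G_2$. By Axiom~\ref{Axiom1}, the same $s_1$ label-1 vertices persist, and Axiom~\ref{Axiom3} demands that none of them remain leaves, so each label-1 vertex $v$ has at least one child; every such child has label $2$, again because labels increase along edges. By Axiom~\ref{Axiom2}, the subtree rooted at $v$ is isomorphic to some $G_k$ with $k\leq 2$, under the natural label shift taking $v$'s label $1$ to $0$. Since the labels appearing in this subtree lie in $\{1,2\}$, after shifting they lie in $\{0,1\}$, forcing $k\leq 1$. The case $k=0$ is ruled out because it would make $v$ a leaf, contradicting Axiom~\ref{Axiom3}. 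Hence $k=1$, and by the structure of $G_1$ found above, $v$ has exactly $s_1$ children, all labeled $2$.

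To close out, I would count. There are $s_1$ distinct label-1 vertices in $G_2$, each rooting a disjoint subtree (tree structure) containing $s_1$ distinct label-2 children, yielding $s_1\cdot s_1 = s_1^2$ distinct label-2 vertices. Axiom~\ref{Axiom1} says the total number of label-2 vertices in $G_2$ is $s_2$, so $s_2 \geq s_1^2$. The main obstacle is justifying the two background structural claims — that Axiom~\ref{Axiom2}'s isomorphism is meant up to a label shift (otherwise the axiom is vacuous for any non-root $v$, since $G_k$'s root is labeled $0$) and that edges must go from strictly smaller to strictly larger labels. Once these are secured, the remaining argument is a direct counting step.
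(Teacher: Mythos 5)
Your argument is correct, but there is nothing in the paper to compare it against: the paper states Lemma~\ref{squared} without proof, citing it as a result established in \cite{CURMGroup}. Taken on its own terms, your derivation from Axioms~\ref{Axiom1}--\ref{Axiom3} and Lemma~\ref{AG_One} is sound: $G_1$ must be the star on $s_1$ pendant vertices labeled $1$; in $G_2$ each of those vertices is forced by Axiom~\ref{Axiom3} to be a non-leaf and by Axiom~\ref{Axiom2} to root a subtree isomorphic to $G_1$ (the cases $k=0$ and $k=2$ being excluded exactly as you say), so it acquires exactly $s_1$ children labeled $2$; disjointness of sibling subtrees then gives $s_1^2$ distinct label-$2$ vertices among the $s_2$ total. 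The two background conventions you flag are both consistent with how the paper uses its axioms: the paper explicitly says that isomorphism of subtrees is taken up to a shift of labels (see the discussion preceding Figure~\ref{subgraphfigure}, and the inductive hypothesis in the proof of Theorem~\ref{Mtheorem}, where the subtree rooted at a vertex labeled $t$ is isomorphic to $G_{k-t}$), and every construction in the paper attaches each new vertex labeled $n$ by an edge from a previously existing vertex, which gives the strict increase of labels along directed edges. It is worth stating those two facts as explicit preliminary observations rather than as caveats, since the counting step genuinely depends on both.
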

An important aspect of Theorem \ref{Mtheorem} relies on \ref{Axiom2}, regarding graph isomorphism. 

\subsection{Graph Isomorphism}
Graphs are \emph{isomorphic} to each other if they have the same number of vertices and edges, connected in the same way. From \ref{Axiom2}, generalized action graphs have isomorphic subtrees, or subgraphs, meaning that previous graphs can be seen in subsequent graphs. Note that when we consider isomorphism in generalized action graphs, the labels on the vertices may be shifted, but the underlying structure of the graphs are the same. We can see this below using an action graph for the super Catalan numbers, $G_3$, in condensed notation.
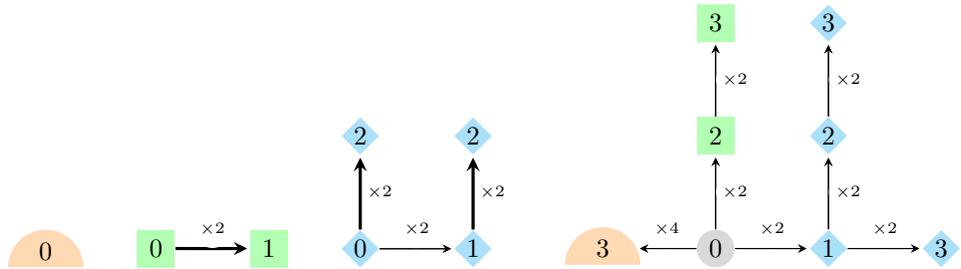
\begin{figure}[H]
\begin{center}
\begin{tikzpicture}[> = stealth,shorten > = 1pt,scale=.5]
    \node[orange,semicircle] (root) at (0,0) {0};
\end{tikzpicture}
\hspace{0.5cm}
\begin{tikzpicture}[> = stealth,shorten > = 1pt,scale=.5]
    \node[green,rectangle] (root) at (0,0) {0};
    \node[green,rectangle] (1) at (3,0) {1};
    \path[->][edge_recent]  (root) to (1);

    {\tiny
    \draw[edge, ->] (root) -- (1) node[label, midway, above] {$\times 2$};
    }
\end{tikzpicture}
\hspace{0.5cm}
\begin{tikzpicture}[> = stealth,shorten > = 1pt,scale=.5]
    \node[cyan,diamond] (root) at (0,0) {0};
    \node[cyan,diamond] (1) at (3,0) {1};
    \node[cyan,diamond] (2) at (0,3) {2};
    \node[cyan,diamond] (4) at (3,3) {2};
{\tiny
    \draw[edge, ->] (root) -- (1) node[label, midway, above] {$\times 2$};
    \draw[edge, ->] (1) -- (4) node[label, midway, right] {$\times 2$};
    \draw[edge, ->] (root) -- (2) node[label, midway, right] {$\times 2$};
} 
    \path[->][edge] (root) to (1);
    \path[->][edge_recent] (root) to (2);
    \path[->][edge_recent] (1) to (4);
\end{tikzpicture}
\hspace{0.5cm}
\begin{tikzpicture}[> = stealth,shorten > = 1pt,scale=.5]
    \node (root) at (0,0) {0};
    \node[cyan,diamond] (1) at (3,0) {1};
    \node[green,rectangle] (2) at (0,3) {2};
    \node[green,rectangle] (3) at (0,6) {3};
    \node[cyan,diamond] (4) at (3,3) {2};
    \node[cyan,diamond] (5) at (3,6) {3};
    \node[orange,semicircle] (6) at (-3,0) {3};
    \node[cyan,diamond] (7) at (6,0) {3};

    \path[->][edge] (root) to (1);
    \path[->][edge] (root) to (2);
    \path[->][edge] (2) to (3);
    \path[->][edge] (1) to (4);
    \path[->][edge] (4) to (5);
    \path[->][edge] (root) to (6);
    \path[->][edge] (1) to (7);

{\tiny
    \draw[edge, ->] (root) -- (1) node[label, midway, above] {$\times 2$};
    \draw[edge, ->] (1) -- (4) node[label, midway, right] {$\times 2$};
    \draw[edge, ->] (4) -- (5) node[label, midway, right] {$\times 2$};
    \draw[edge, ->] (root) -- (2) node[label, midway, right] {$\times 2$};
    \draw[edge, ->] (2) -- (3) node[label, midway, right] {$\times 2$};
    \draw[edge, ->] (root) -- (6) node[label, midway, above] {$\times 4$};
    \draw[edge, ->] (1) -- (7) node[label, midway, above] {$\times 2$};
}
\end{tikzpicture}
\end{center}
\caption{Super Catalan Action Graphs $G_0$ through $G_3$}
\label{subgraphfigure}
\end{figure}
Figure \ref{subgraphfigure} shows that each subtree of $G_3$ is isomorphic to a previous action graph, using colored vertices. The number of isomorphic subtrees are indicated by the multiplier on the edges connected to the 0 vertex. For example, $G_3$ has two subtrees isomorphic to $G_2$, two subtrees isomorphic to $G_1$, and four subtrees isomorphic to $G_0$.

\section{Building Generalized Action Graphs} \label{Sec: BAG}
There was previously no general solution for building generalized action graphs based on any given number sequence. Previous progress in \cite{CURMGroup}, \cite{ActionGraphs}, and \cite{GenActionGraphs} defined action graphs and described their properties for specific sequences. Cochran \cite{AliPoster} conjectured a way to generalize this work for any sequence that satisfies a certain property. In particular, she suggested a way to determine the number of new vertices labeled $n$ adjacent to the 0 vertex. Said another way, she conjectured the value of the multiplier on the edge from 0 to the vertex labeled $n$ in condensed notation. 
\begin{conjecture}[\cite{AliPoster}] \label{AliConjecture} 
    Given an appropriate sequence, $s_n$, of positive integers, in order to build action graph $G_n$, the number of new vertices that must be added to the 0 vertex is $$z_n = s_n - \sum_{i=1}^{n-1}z_i\cdot s_{n-i}.$$
\end{conjecture}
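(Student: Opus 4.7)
The plan is to prove the conjectured formula by counting, in two ways, the vertices of $G_n$ that carry the label $n$. By Axiom 1 this count equals $s_n$, so if I can show that it also equals $z_n + \sum_{i=1}^{n-1}z_i\cdot s_{n-i}$, rearranging yields the claimed identity. The whole argument rests on the three axioms in Definition \ref{axiomdef} together with Lemma \ref{AG_One}.

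First I would verify that the vertices of $G_n$ labeled $n$ are precisely the leaves of $G_n$. One direction is Axiom 3. For the other, note that the $s_n$ vertices introduced at step $n$ have not yet had any children attached (no later step of the induction has happened), so they are leaves. Any vertex labeled $m<n$ that was still a leaf after step $n-1$ would violate Axiom 3 applied to $G_{n-1}$ unless $m=n-1$, but in $G_n$ such a vertex is still labeled $n-1$ and is no longer allowed to be a leaf by Axiom 3 applied to $G_n$; hence every earlier vertex must have acquired a descendant by step $n$. So the set of leaves of $G_n$ has exactly $s_n$ elements, each labeled $n$.

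Next I would partition those leaves according to the neighbor of the root through which they are reached. By Lemma \ref{AG_One} the root is the unique vertex labeled $0$, so every non-root vertex lies in the subtree rooted at exactly one neighbor of $0$; the label of that neighbor is some $i$ with $1 \le i \le n$. For a neighbor $v$ with label $i$, Axiom 2 says its subtree is isomorphic to some $G_k$ with $k\le n$ (up to the label shift noted in the isomorphism discussion). Because the leaves of $G_n$ have label $n$ while the leaves of $G_k$ have label $k$, and the isomorphism shifts labels by $i$, we get $i+k=n$, i.e.\ $k=n-i$. Consequently the subtree contains exactly $s_{n-i}$ leaves, and among its vertices labeled $n$ we count $s_{n-i}$. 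When $i=n$ the subtree is $G_0$, contributing the single leaf $v$ itself; I would handle this separately, recording it as the $z_n$ term.

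Putting the pieces together, the leaves of $G_n$ split into $z_n$ leaves that are themselves neighbors of the root, plus, for each $i$ from $1$ to $n-1$, the $s_{n-i}$ leaves inside each of the $z_i$ subtrees rooted at a neighbor labeled $i$. This yields the identity
\[
s_n = z_n + \sum_{i=1}^{n-1} z_i\cdot s_{n-i},
\]
which rearranges to the formula in the conjecture. The main obstacle I anticipate is step two, pinning down that the subtree at a neighbor labeled $i$ is not merely isomorphic to some $G_k$ (which is all Axiom 2 gives) but specifically to $G_{n-i}$; the label-shift argument using Axiom 3 is exactly what closes that gap, and it should be stated carefully to make clear that the ``isomorphism up to label shift'' used throughout the paper forces $k=n-i$.
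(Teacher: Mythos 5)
The statement you are proving is not proved in the paper at all: it is recorded as a conjecture and used only as motivation for Theorem \ref{Mtheorem}, which is the \emph{converse} (sufficiency) direction and is established there by an explicit inductive construction of the graphs $G_n$; the authors even list the necessity of the recursion as an open question in their final section. Your argument is therefore a genuinely different route, and a worthwhile one. You count the leaves of $G_n$ two ways: Axioms 1 and 3 give that the leaves are exactly the $s_n$ newly added vertices labeled $n$ (the new vertices have no children yet, and every leaf is labeled $n$), and then you partition these leaves among the subtrees hanging off the unique root (unique by Lemma \ref{AG_One}), using Axiom 2 to identify the subtree at a root-neighbor labeled $i$ with $G_{n-i}$ and hence to count $s_{n-i}$ leaves in each of the $z_i$ such subtrees, with the $i=n$ neighbors contributing the $z_n$ term. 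This yields $s_n = z_n + \sum_{i=1}^{n-1} z_i\cdot s_{n-i}$ and hence the conjectured formula; where the paper's proof of Theorem \ref{Mtheorem} \emph{builds} graphs realizing the recursion, your count shows the recursion is \emph{forced} by the axioms whenever the graphs exist. The one step you must make explicit is your reading of Axiom 2: as literally stated it only promises isomorphism to \emph{some} $G_k$ with $k\leq n$, with no constraint tying $k$ to the label of the subtree's root, so the identification $k=n-i$ does not follow from the axiom alone. It does follow once you adopt the convention, used implicitly in the paper's figures and its discussion of isomorphism, that the isomorphism shifts every label uniformly by the label $i$ of the subtree's root; then comparing the label $n$ of the subtree's leaves with the label $k$ of the leaves of $G_k$ gives $k=n-i$ exactly as you say. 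You flag this yourself, correctly, as the crux; with that convention stated, the proof is sound and in fact answers the necessity question the authors leave open.
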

Using the properties from Definition \ref{axiomdef} and inspired by Conjecture \ref{AliConjecture}, we arrive at the following theorem.  
\Mtheorem*

\begin{proof}
   Let $s_n$ be a positive sequence for $n\geq 0$ such that $s_0 = 1$ and there exist positive integers $z_n$ such that $z_1 = s_1$ and for all $n\geq 2$,  
    \begin{equation}\label{formula}
        s_n= z_n + \sum_{i=1}^{n-1}z_i\cdot s_{n-i}.
    \end{equation}
By definition of generalized action graph, since $s_0 =1$, $G_0$ is a single vertex labeled $0$. The construction of the remaining graphs will proceed by induction. We include more base cases than necessary in order to illustrate the construction. 
    
    \noindent\underline{Base Case:} Suppose $n=1$.
    
    By assumption, $s_1=z_1$. Adding $s_1=z_1$ new vertices labeled 1 to the previous graph, we arrive at Figure \ref{ProofG_1}.
    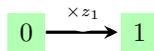
\begin{figure}[H]
    \begin{center}
    \begin{tikzpicture}[> = stealth,shorten > = 1pt,scale=.5]
        \node[green, rectangle] (root) at (0,0) {0};
        \node[green, rectangle] (1) at (3,0) {1};
        \path[->][edge_recent]  (root) to (1);
    
        {\tiny
        \draw[edge, ->] (root) -- (1) node[label, midway, above] {$\times z_1$};
        }
    \end{tikzpicture}
    \end{center}
    \caption{Construct $G_1$ by adding $z_1$ vertices adjacent to the root vertex.}
    \label{ProofG_1}
    \end{figure}
From Figure \ref{ProofG_1}, we can see that $z_1$ vertices labeled $1$ are added to the vertex labeled $0$. Since $z_1 = s_1$, we have added a total number of $s_1$ vertices labeled $1$ to the previous graph, making this graph $G_1$.
    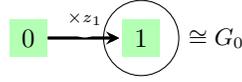
\begin{figure}[H]
    \begin{center}
    \begin{tikzpicture}[> = stealth,shorten > = 1pt,scale=.5]
        \node[green, rectangle] (root) at (0,0) {0};
        \node[green, rectangle] (1) at (3,0) {1};
        \node[rectangle, fill=none] at (5, 0) {\small{$\cong G_0$}};
        \path[->][edge_recent]  (root) to (1);
        \node[ellipse,
        draw = black,
        fill = none,
        minimum width = 1cm,
        minimum height = 1cm] (e) at (3,0) {};
        {\tiny
        \draw[edge, ->] (root) -- (1) node[label, midway, above] {$\times z_1$};
        }
    \end{tikzpicture}
    \end{center}
    \caption{Isomorphic subtrees in $G_1$}
    \label{IsoG_1}
    \end{figure}
    Figure \ref{IsoG_1} shows that the subtree with root vertex 1 is isomorphic to the generalized action graph $G_0$.  In Figure \ref{ProofG_1}, we added $s_1$ vertices to the generalized action graph $G_0$, with $z_1$ vertices adjacent to the 0 vertex, and each subtree with root vertex adjacent to the 0 vertex is isomorphic to some action graph $G_{n-t}$ where $t \leq n$, in this case, $G_0$. Note that all leaves in this graph are labeled $1$. Thus $G_1$ satisfies the definition of generalized action graph given in Definition \ref{axiomdef}. 
    
    \noindent \underline{Base Case:} Suppose $n=2$.

    By plugging in $n=2$ into Formula \ref{formula}, we see $z_2$ must satisfy
    \begin{align}
        s_2 &= z_2 + \sum_{i=1}^{2-1} z_i \cdot s_{2-i} \notag \\
        &= z_2 + z_1\cdot s_1. \notag
    \end{align}
    By assumption, we know $s_1 = z_1$, so $s_2 = z_2 + z_1 \cdot z_1$, and $z_2 = s_2 - z_1z_1$.

    To construct $G_2$, we will add a total of $z_1$ vertices labeled 2 off of the vertex labeled 1 in order to make the subtrees rooted at vertices labeled 1 isomorphic to graph $G_1$. We will also add $z_2$ vertices labeled 2 adjacent to the vertex labeled 0.
    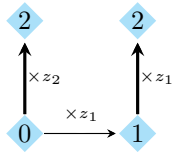
\begin{figure}[H]
    \begin{center}
    \begin{tikzpicture}[> = stealth,shorten > = 1pt,scale=.5]
        \node[cyan, diamond] (root) at (0,0) {0};
        \node[cyan, diamond] (1) at (3,0) {1};
        \node[cyan, diamond] (2) at (0,3) {2};
        \node[cyan, diamond] (4) at (3,3) {2};
    {\tiny
        \draw[edge, ->] (root) -- (1) node[label, midway, above] {$\times z_1$};
        \draw[edge, ->] (1) -- (4) node[label, midway, right] {$\times z_1$};
        \draw[edge, ->] (root) -- (2) node[label, midway, right] {$\times z_2$};
    } 
        \path[->][edge] (root) to (1);
        \path[->][edge_recent] (root) to (2);
        \path[->][edge_recent] (1) to (4);
    \end{tikzpicture}
    \end{center}
    \caption{Construct $G_2$ by adding $s_2$ vertices to $G_1$}
    \label{ProofG_2}
    \end{figure}
Figure \ref{ProofG_2} shows $z_2$ vertices labeled $2$ are added to the vertex labeled $0$, and $z_1 \times z_1$ vertices labeled $2$ are added to the vertices labeled $1$. From this, we know a total of $s_2$ vertices labeled $2$ have been added to the previous generalized action graph, showing that  $G_2$ satisfies \ref{Axiom1} of the definition of generalized action graph.  Since $z_1$ is assumed to be positive, all leaves are labeled 2, and thus $G_2$ also satisfies \ref{Axiom3}.  
    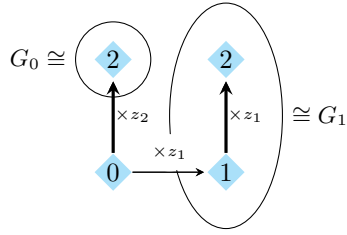
\begin{figure}[H]
    \begin{center}
    \begin{tikzpicture}[> = stealth,shorten > = 1pt,scale=.5]
        \node[cyan, diamond] (root) at (0,0) {0};
        \node[cyan, diamond] (1) at (3,0) {1};
        \node[cyan, diamond] (2) at (0,3) {2};
        \node[cyan, diamond] (4) at (3,3) {2};
        \node[rectangle, fill=none] at (5.5,1.5) {\small{$\cong G_1$}};
        \node[rectangle, fill=none] at (-2, 3) {\small{$G_0 \cong$}};
        \node[ellipse,
        draw = black,
        fill = none,
        minimum width = 1.5cm,
        minimum height = 3cm] (e) at (3,1.5) {};
        \node[ellipse,
        draw = black,
        fill = none,
        minimum width = 1cm,
        minimum height = 1cm] (e) at (0,3) {};
    {\tiny
        \draw[edge, ->] (root) -- (1) node[label, midway, above] {$\times z_1$};
        \draw[edge, ->] (1) -- (4) node[label, midway, right] {$\times z_1$};
        \draw[edge, ->] (root) -- (2) node[label, midway, right] {$\times z_2$};
    } 
        \path[->][edge] (root) to (1);
        \path[->][edge_recent] (root) to (2);
        \path[->][edge_recent] (1) to (4);
    \end{tikzpicture}
    \end{center}
    \caption{Isomorphic subtrees in $G_2$}
    \label{IsoG_2}
    \end{figure}    
    Figure \ref{IsoG_2} illustrates that  subtrees with root vertex 1 are isomorphic to the generalized action graph $G_1$ and leaves are isomorphic to $G_0$. So, we have checked \ref{Axiom3}, and conclude $G_2$ satisfies the definition of generalized action graph. 
    
    \noindent\underline{Base Case:} Suppose $n=3$. 
    
    By plugging in $n=3$ into Formula \ref{formula},  we have
    \begin{align}
        s_3 &= z_3 + \sum_{i=1}^{3-1} z_i \cdot s_{3-i} \notag \\
        &= z_3 + z_1 \cdot s_2 + z_2 \cdot s_1. \notag
    \end{align}
    By assumption $s_1 = z_1$, and from base case $n=2$ we know $s_2 = z_2 + z_1 \cdot z_1$. So equivalently
    \begin{align}
        s_3 &= z_3 + z_1(z_2 + z_1 \cdot z_1) + z_2 \cdot z_1 \notag\\
        &=z_3 + z_1 \cdot z_2 + z_1 \cdot z_1 \cdot z_1 + z_2 \cdot z_1. \notag
    \end{align}

    We use the values $z_1$, $z_2$, and $z_3$ to construct $G_3$ from $G_2$ as seen in Figure \ref{ProofG_3}.
    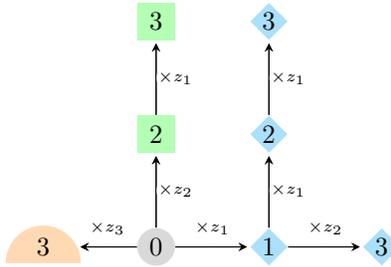
\begin{figure}[H]
    \begin{center}
    \begin{tikzpicture}[> = stealth,shorten > = 1pt,scale=.5]
        \node (root) at (0,0) {0};
        \node[cyan, diamond] (1) at (3,0) {1};
        \node[green, rectangle] (2) at (0,3) {2};
        \node[green, rectangle] (3) at (0,6) {3};
        \node[cyan, diamond] (4) at (3,3) {2};
        \node[cyan, diamond] (5) at (3,6) {3};
        \node[orange, semicircle] (6) at (-3,0) {3};
        \node[cyan, diamond] (7) at (6,0) {3};
    
        \path[->][edge] (root) to (1);
        \path[->][edge] (root) to (2);
        \path[->][edge] (2) to (3);
        \path[->][edge] (1) to (4);
        \path[->][edge] (4) to (5);
        \path[->][edge] (root) to (6);
        \path[->][edge] (1) to (7);
    
    {\tiny
        \draw[edge, ->] (root) -- (1) node[label, midway, above] {$\times z_1$};
        \draw[edge, ->] (1) -- (4) node[label, midway, right] {$\times z_1$};
        \draw[edge, ->] (4) -- (5) node[label, midway, right] {$\times z_1$};
        \draw[edge, ->] (root) -- (2) node[label, midway, right] {$\times z_2$};
        \draw[edge, ->] (2) -- (3) node[label, midway, right] {$\times z_1$};
        \draw[edge, ->] (root) -- (6) node[label, midway, above] {$\times z_3$};
        \draw[edge, ->] (1) -- (7) node[label, midway, above] {$\times z_2$};
    }
    \end{tikzpicture}
    \end{center}
    \caption{Construct $G_3$ by adding $s_3$ vertices $G_2$}
    \label{ProofG_3}
    \end{figure}
Figure \ref{ProofG_3} shows $z_3$ vertices labeled $3$ adjacent to the vertex labeled $0$, as well as $z_1$ vertices labeled $3$ adjacent to each vertex labeled $2$, and $z_2$ vertices labeled $3$ adjacent to each vertex labeled $1$. From this we know the total number of vertices added is $s_3$, making this graph $G_3$. Thus $G_3$ satisfies \ref{Axiom1} of the definition of generalized action graph. Since all leaves in $G_2$ were labeled 2 and $z_1$ is positive, we see all leaves in $G_3$ are labeled 3, satisfying \ref{Axiom3}. 
    \begin{figure}[H]
    \begin{center}
    \begin{tikzpicture}[> = stealth,shorten > = 1pt,scale=.5]
        \node (root) at (0,0) {0};
        \node[cyan, diamond] (1) at (3,0) {1};
        \node[green, rectangle] (2) at (0,3) {2};
        \node[green, rectangle] (3) at (0,6) {3};
        \node[cyan, diamond] (4) at (3,3) {2};
        \node[cyan, diamond] (5) at (3,6) {3};
        \node[orange, semicircle] (6) at (-3,0) {3};
        \node[cyan, diamond] (7) at (6,0) {3};
    
        \path[->][edge] (root) to (1);
        \path[->][edge] (root) to (2);
        \path[->][edge] (2) to (3);
        \path[->][edge] (1) to (4);
        \path[->][edge] (4) to (5);
        \path[->][edge] (root) to (6);
        \path[->][edge] (1) to (7);
    
        \node[ellipse, draw = black, fill = none, minimum width = 3cm, minimum height = 5cm] (e) at (4.5,3) {};
        \node[ellipse, draw = black, fill = none, minimum width = 1.25cm, minimum height = 3cm] (e) at (0,4.5) {};
        \node[ellipse, draw = black, fill = none, minimum width = 1cm, minimum height = 1cm] (e) at (-3,0) {};
        \node[rectangle, fill=none] at (8.5,3) {\small{$\cong G_2$}};
        \node[rectangle, fill=none] at (-2.25,4.5) {\small{$G_1 \cong$}};
        \node[rectangle, fill=none] at (-5,0) {\small{$G_0 \cong$}};
    
    {\tiny
        \draw[edge, ->] (root) -- (1) node[label, midway, above] {$\times z_1$};
        \draw[edge, ->] (1) -- (4) node[label, midway, right] {$\times z_1$};
        \draw[edge, ->] (4) -- (5) node[label, midway, right] {$\times z_1$};
        \draw[edge, ->] (root) -- (2) node[label, midway, right] {$\times z_2$};
        \draw[edge, ->] (2) -- (3) node[label, midway, right] {$\times z_1$};
        \draw[edge, ->] (root) -- (6) node[label, midway, above] {$\times z_3$};
        \draw[edge, ->] (1) -- (7) node[label, midway, above] {$\times z_2$};
    }
    \end{tikzpicture}
    \end{center}
    \caption{Isomorphic subtrees in $G_3$}
    \label{IsoG_3}
    \end{figure}
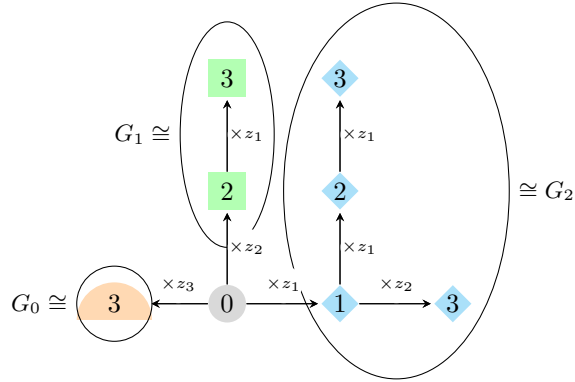
    Figure \ref{IsoG_3} illustrates that subtrees with root vertex labeled 1 are isomorphic to $G_2$, subtrees with root vertex labeled 2 are isomorphic to $G_1$, and subtrees with root vertex labeled 3 are isomorphic to $G_0$. So, we have check \ref{Axiom2}, and conclude $G_3$ satisfies \ref{axiomdef}.
    
    \noindent\underline{Inductive Hypothesis:} Consider the case where $n=k\geq 0$. In particular, assume that for some $k \geq 0$, there exists a sequence of generalized action graphs $G_0, \dots, G_k$ that are constructed by adding $s_i$ vertices to the action graph $G_{i-1}$, that there are $z_i$ vertices labeled $i$ adjacent to the vertex labeled 0, and each subtree in $G_k$ with the root vertex labeled $t$ adjacent to the 0 vertex is isomorphic to $G_{k-t}$, where $t \leq k$, with all leaves labeled $k$.

    Since $k$ is arbitrary, there is no way to draw these action graphs in their completeness. However, by induction, it follows that for each vertex $v$ labeled $i$ adjacent the vertex labeled 0, the subtree with root $v$ is isomorphic to $G_{k-i}$, and we will use a dashed edge to represent the remainder of the subtree.
    \begin{figure}[H]
    \begin{center}
    \begin{tikzpicture}[> = stealth,shorten > = 1pt,scale=.5]
        \node (root) at (0,0) {0};
        \node[new, diamond] (1) at (4,0) {1};
        \node[rectangle, fill=none] at (2,-0.5) 
        {\small{$\times z_1$}};
        \node[cyan, isosceles triangle] (2) at (2.83,2.83) {2};
        \node[rectangle, fill=none] at (2,1.25) 
        {\small{$\times z_2$}};
        \node [purple, trapezium] (3) at (0, 4) {3};
        \node[rectangle, fill=none] at (0.6,2) 
        {\small{$\times z_3$}};
        \node[green, rectangle] (5) at (-2.83,2.83) {k-1};
        \node[rectangle, fill=none] at (-2.15, 1) {\small{$z_{k-1} \times$}};
        \node[orange, semicircle] (4) at (-4,0) {$k$};
        \node[rectangle, fill=none] at (-1.5,-0.5) 
        {\small{$\times z_k$}};
        \node[green, rectangle] (6) at (-2.83, 5.6) {$k$};
        \node[rectangle, fill=none] at (-2, 4.25) {\small{$\times z_1$}};
        \node[rectangle, fill=none] at (-0.5,2.8) {\small{.}};
        \node[rectangle, fill=none] at (-1.6,2.25) {\small{.}};
        \node[rectangle, fill=none] at (-1.15,2.7) {\small{.}};
    
        \path[->][edge] (root) to (1);
        \path[->][edge] (root) to (2);
        \path[->][edge] (root) to (3);
        \path[->][edge] (root) to (4);
        \path[->][edge] (root) to (5);
        \path[->][edge] (5) to (6);
        \draw[->,thick,black,dashed] (4.75,0) -- (6.75,0) {};
        \draw[->,thick,black,dashed] (2.83,3.75) -- (2.83,5.75) {};
        \draw[->,thick,black,dashed] (0,4.75) -- (0,6.75) {};
        
    \end{tikzpicture}
    \end{center}
    \caption{Graph $G_k$}
    \label{G_k}
    \end{figure}
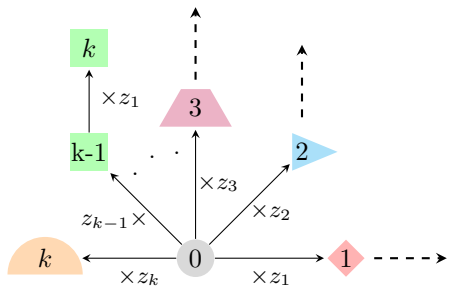
Figure \ref{G_k} shows that there are $z_k$ vertices labeled $k$ adjacent to the vertex labeled $0$. By the induction hypothesis, we also know that there are a total of $s_k$ vertices labeled $k$ in the graph $G_k$.   
    \begin{figure}[H]
    \begin{center}
    \begin{tikzpicture}[> = stealth,shorten > = 1pt,scale=.5]
        \node (root) at (0,0) {0};
        \node[new, diamond] (1) at (4,0) {1};
        \node[rectangle, fill=none] at (2,-0.5) 
        {\small{$\times z_1$}};
        \node[cyan, isosceles triangle] (2) at (2.83,2.83) {2};
        \node[rectangle, fill=none] at (2,1.25) 
        {\small{$\times z_2$}};
        \node [purple, trapezium] (3) at (0, 4) {3};
        \node[rectangle, fill=none] at (0.6,2) 
        {\small{$\times z_3$}};
        \node[green, rectangle] (5) at (-2.83,2.83) {k-1};
        \node[rectangle, fill=none] at (-2.15, 1) {\small{$z_{k-1} \times$}};
        \node[orange, semicircle] (4) at (-4,0) {$k$};
        \node[rectangle, fill=none] at (-1.5,-0.5) 
        {\small{$\times z_k$}};
        \node[green, rectangle] (6) at (-2.83, 5.6) {$k$};
        \node[rectangle, fill=none] at (-2.15, 4.25) {\small{$\times z_1$}};
        \node[rectangle, fill=none] at (-0.5,2.8) {\small{.}};
        \node[rectangle, fill=none] at (-1.6,2.25) {\small{.}};
        \node[rectangle, fill=none] at (-1.15,2.7) {\small{.}};
        \node[rectangle, fill=none] at (8.25,0) {\small{$\cong G_{k-1}$}};
        \node[rectangle, fill=none] at (5,4) 
        {\small{$\cong G_{k-2}$}};
        \node[rectangle, fill=none] at (2,6.5) 
        {\small{$\cong G_{k-3}$}};
        \node[rectangle, fill=none] at (-6,0) {\small{$G_0\cong$}};
        \node[rectangle, fill=none] at (-5, 4.25) {\small{$G_1\cong$}};
    
        \path[->][edge] (root) to (1);
        \path[->][edge] (root) to (2);
        \path[->][edge] (root) to (3);
        \path[->][edge] (root) to (4);
        \path[->][edge] (root) to (5);
        \path[->][edge] (5) to (6);
    
        \node[ellipse, draw = black, fill = none, minimum width = 2cm, minimum height = 1cm] (e) at (5,0) {};
        \draw[->,thick,black,dashed] (4.75,0) -- (6.75,0) {};
        \node[ellipse, draw = black, fill = none, minimum width = 1cm, minimum height = 2cm] (e) at (2.83,4) {};
        \draw[->,thick,black,dashed] (2.83,3.75) -- (2.83,5.75) {};
        \node[ellipse, draw = black, fill = none, minimum width = 1cm, minimum height = 2cm] (e) at (0,5) {};
        \draw[->,thick,black,dashed] (0,4.75) -- (0,6.75) {};
        \node[ellipse, draw = black, fill = none, minimum width = 1cm, minimum height = 1cm] (e) at (-4,0) {};
        \node[ellipse, draw = black, fill = none, minimum width = 1.25cm, minimum height = 2.5cm] (e) at (-2.83,4.25) {};

    \end{tikzpicture}
    \end{center}
    \caption{Isomorphic subtrees in $G_k$}
    \label{isoG_k}
    \end{figure}
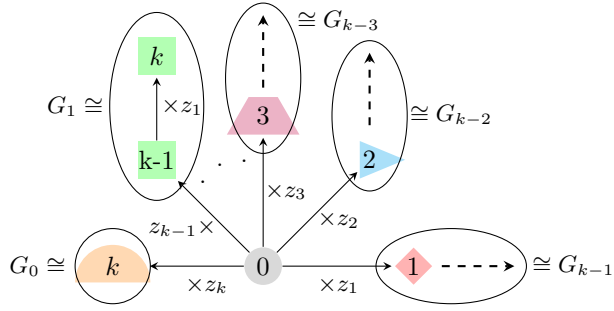
   Figure \ref{isoG_k} shows that each subtree with root vertex adjacent to the vertex labeled 0 is isomorphic to some $G_{k-t}$ where $t\leq k$. 
    
    \underline{Inductive Step:} 
    We will construct $G_{k+1}$ using $G_k$. 

\begin{figure}[H]
\begin{center}
\begin{tikzpicture}[> = stealth,shorten > = 1pt,scale=.5]
    \node (root) at (0,0) {0};
    \node[blue, trapezium] (1) at (4,0) {1};
    \node[rectangle, fill=none] at (2,-0.5) 
    {\small{$\times z_1$}};
    \node[new, diamond] (2) at (2.83,2.83) {2};
    \node[rectangle, fill=none] at (2,1.25) 
    {\small{$\times z_2$}};
    \node [cyan, isosceles triangle] (3) at (0, 4) {3};
    \node[rectangle, fill=none] at (0.6,2) 
    {\small{$\times z_3$}};
    \node[green, rectangle] (4) at (-2.83,2.83) {$k$};
    \node[rectangle, fill=none] at (-1,1.75) 
    {\small{$\times z_k$}};
    \node[orange, semicircle] (5) at (-4, 0) {$k+1$};
    \node[rectangle, fill=none] at (-1.5, -0.5) 
    {\small{$\times z_{k+1}$}};
    \node[rectangle, fill=none] at (-0.5,2.8) {\small{.}};
    \node[rectangle, fill=none] at (-1.6,2.25) {\small{.}};
    \node[rectangle, fill=none] at (-1.15,2.7) {\small{.}};
    \node [green, rectangle] (6) at (-2.83, 5.6) {$k+1$};
    \node[rectangle, fill=none] at (-2.2, 4) {\small$\times z_1$};

    \path[->][edge] (root) to (1);
    \path[->][edge] (root) to (2);
    \path[->][edge] (root) to (3);
    \path[->][edge] (root) to (4);
    \path[->][edge] (root) to (5);
    \path[->][edge] (4) to (6);
    
    \draw[->,thick,black,dashed] (4.75,0) -- (6.75,0) {};
    \draw[->,thick,black,dashed] (2.83,3.58) -- (2.83,5.58) {};
    \draw[->,thick,black,dashed] (0,4.75) -- (0,6.75) {};

    \node[ellipse, draw = black, fill = none, minimum width = 2cm, minimum height = 1cm] (e) at (5,0) {};
    \node[rectangle, fill=none] at (8.25,0) {\small{$\cong G_k$}};
    \node[ellipse, draw = black, fill = none, minimum width = 1.25cm, minimum height = 1.25cm] (e) at (-4,0) {};
    \node[rectangle, fill=none] at (5,3.83) {\small{$\cong G_{k-1}$}};
    \node[ellipse, draw = black, fill = none, minimum width = 1cm, minimum height = 2cm] (e) at (2.83,3.83) {};
    \node[rectangle, fill=none] at (2,6.5) {\small{$\cong G_{k-2}$}};
    \node[ellipse, draw = black, fill = none, minimum width = 1cm, minimum height = 2cm] (e) at (0,5) {};
    \node[rectangle, fill=none] at (-5,3.83) {\small{$G_1\cong$}};
    \node[ellipse, draw = black, fill = none, minimum width = 1.25cm, minimum height = 2.25cm] (e) at (-2.83,4.2) {};
    \node[rectangle, fill=none] at (-6.25,0) {\small{$G_0 \cong$}};
    
\end{tikzpicture}
\end{center}
\caption{Isomorphic subtrees in $G_{k+1}$.}
\label{IsoG_k+1}
\end{figure}
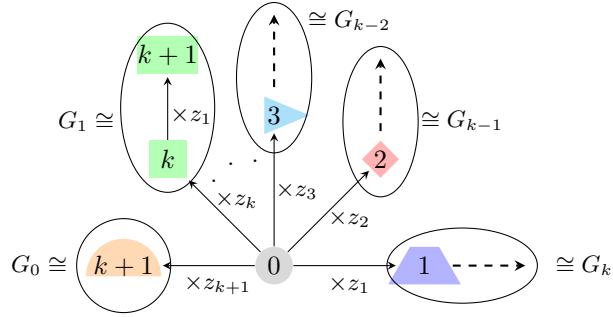

We construct $G_k+1$ in the following way, as illustrated by Figure \ref{IsoG_k+1}. 

We add $z_{k+1}$ vertices, which are each isomorphic to $G_0$, adjacent to the vertex labeled 0.

In general, we consider the vertices labeled $i$ adjacent to the vertex labeled 0 in $G_k$, and those vertices are each roots of subtrees isomorphic to $G_{k-i}$. By the induction hypothesis, we already know how to add new vertices labeled $k-i+1$ to $G_{k-i}$ in order to construct $G_{k-i+1}$. We use this same process to add vertices labeled $k+1$ to the subtrees isomorphic to $G_{k-i}$ in $G_k$. This will result in $z_i$ subtrees isomorphic to $G_{k-i+1}$ with with roots labeled $i$ adjacent to the vertex labeled 0 in $G_{k+1}$. Once we have done this for all $1\leq i\leq k$, the construction is complete. 

For example: In $G_k$, each vertex labeled $k$ adjacent to 0 is the root of a subtree isomorphic to $G_0$. So, to each vertex labeled $k$ adjacent to 0, we add $z_1$ adjacent new vertices. This results in $z_k$ subtrees isomorphic to $G_1$ with roots labeled $k$ adjacent to the 0 vertex in $G_{k+1}$. Note that this step adds $z_k\cdot s_1$ new vertices.

As a further example: In $G_k$, each vertex labeled $k-1$ adjacent to 0 is the root of a subtree isomorphic to $G_1$. To each of these subtrees isomorphic to $G_1$, we add new vertices labeled $k$ in the same way that we added vertices labeled 2 to construct $G_2$ from $G_1$. This will result in $z_{k-1}$ subtrees isomorphic to $G_2$ with roots labeled $k-1$ adjacent to the 0 vertex in $G_{k+1}$. 

By construction and the induction hypothesis, $G_{k+1}$ satisfies \ref{Axiom2}. Since $G_{k+1}$ satisfies \ref{Axiom2}, each leaf in $G_k$ now has $z_1$ new leaves labeled $K+1$ adjacent to it in $G_{k+1}$. Thus, $G_{k+1}$ satisfies \ref{Axiom3}. 

By the induction hypothesis and \ref{Axiom1} of the definition of generalized action graph, this construction adds a total of $z_i\cdot s_{k+1-i}$ new vertices for each $1\leq i\leq k$. Summing over $i$ and including the $z_{k+1}$ vertices labeled $k+1$ adjacent to the 0 vertex, the total number of vertices labeled $k+1$ in $G_{k+1}$ is 
\[z_{k+1}+\sum_{i=1}^{k} z_i\cdot s_{k+1-i},\]

which, by assumption, is equal to $s_{k+1}$. Thus $G_{k+1}$ satisfies the definition of generalized action graph, and our proof by induction is complete. 

\end{proof}

\section{Further Questions}
This project opens several directions for future analysis. Theorem \ref{Mtheorem} provides sufficient conditions to construct action graphs. Theorem \ref{Mtheorem} can also help us determine other sequences that could possibly yield action graphs. By finding other sequences that do yield action graphs, we could further generalize what makes sequences like the Catalan, Fuss-Catalan, and super Catalan numbers unique. Another open question is whether the hypotheses of Theorem \ref{Mtheorem} are in fact necessary conditions for generalized action graphs to exist.


\begin{thebibliography}{99} 

\bibitem{AliPoster}
Ali Cochran. ``Action Graphs for Catalan Sequences'' \textit{MathFest Poster Session} (2023) 

\bibitem{ActionGraphs} Alvarez, Bergner, Lopez. ``Action Graphs and Catalan Numbers.'' \textit{Journal of Integer Sequences} Vol. 18 (2015)
\href{https://arxiv.org/abs/1503.00044v1}{https://arxiv.org/abs/1503.00044v1}

\bibitem{CURMGroup} Caldwell, Cochran, Glisson, Jennings,
McDicken, Proctor. ``Catalan Number Sequences and Action Graphs'' \textit{Mathematics Exchange. Ball State University} Volume 18. (2025)

\bibitem{GenActionGraphs} Danielle Cressman, Jonathan Lin, An Nguyen, and Luke Wiljanen. 
``Generalized action graphs.'' 

\bibitem{Reedy}
Julia E. Bergner and Philip Hackney. ``Reedy categories which encode the notion of category.''
\textit{Fundamenta Mathematicae} 228.3 (2015) p. 193-222.
\href{http://eudml.org/doc/282637}{http://eudml.org/doc/282637}

\bibitem{catalantext} Richard Stanley. ``Catalan Numbers'' \textit{Cambridge University Press, New York} (2015)
\end{thebibliography}
\end{document}